\definecolor{darkgreen}{rgb}{0.,0.5,0.}
\numberwithin{equation}{section} \overfullrule 5pt
\newtheorem{thm}{Theorem}[section]
\newtheorem{lem}[thm]{Lemma}
\theoremstyle{definition}
\newtheorem{defi}{Definition}[section]
\newcommand{\inv}{\text{inv}}
\newcommand{\NN}{\mathbb{N}}
\newcommand{\Sym}{\mathfrak{S}}
\title[Automaticity of the Hankel determinants]{%
On the automaticity of the Hankel determinants of a family of automatic sequences} 
\date{August 12, 2018}
\author{Yining Hu}
\address{School of Mathematics and Statistics, 
Huazhong University of Science and Technology, Wuhan, PR China}
\email{huyining@protonmail.com}
\author{Guoniu Wei-Han}
\address{I.R.M.A., UMR 7501, Universit\'e de Strasbourg
et CNRS, 7 rue Ren\'e Descartes, F-67084 Strasbourg, France}
\email{guoniu.han@unistra.fr}
\subjclass[2010]{05A05, 11B50, 11B85, 11C20 11J72, 11J82}
\keywords{automatic sequence, Hankel determinant, irrationality exponent}
\begin{document}
\begin{abstract} % <<<
Hankel determinants and automatic sequences are two classical subjects
widely studied in Mathematics and Theoretical Computer Science.
However, these two topics were considered totally independently, until in 1998,
when Allouche, Peyri\`ere, Wen and Wen proved that all the Hankel determinants 
of the Thue-Morse sequence are nonzero. 
This property allowed Bugeaud to prove that the irrationality exponents of the Thue-Morse-Mahler numbers are exactly 2.
Since then, the Hankel determinants
of several other automatic sequences, in particular, the paperfolding sequence,
the Stern sequence, the period-doubling sequence, are studied by 
Coons, Vrbik, Guo, Wu, Wen, Bugeaud, Fu, Han,  
Fokkink, Kraaikamp, and Shallit.
On the other hand, it is known that the Hankel determinants of a rational power series
are ultimately zero, and
the Hankel determinants of a quadratic power series over finite fields are ultimately periodic.
It is therefore natural to ask if we can obtain similar results about the Hankel determinants of algebraic series. 
In the present paper, we provide a partial answer to this question by establishing the automaticity of the reduced Hankel determinants modulo $2$
of a family of automatic sequences.
As an application of our result, we give upper bounds for the irratoinality
exponent of a family of automatic numbers.
\end{abstract}

% >>>
\maketitle

\section{Introduction}\label{sec:Intro} % <<<

The present paper deals with two classical objects in Mathematics
and Theoretical Computer Science, namely, Hankel determinants
and automatic sequences.
For each infinite sequence
${\bf c}=(c_j)_{j\geq 0}$
and each nonnegative integer~$n$ 
the {\it Hankel determinant} of order $n$ of 
the sequence ${\bf c}$ is defined by
\begin{equation}\label{def:Hankel}
H_{n}({\bf c}):=
\begin{vmatrix}
  c_0 & c_{1}&\cdots & c_{n-1} \\
  c_{1} & c_{2}& \cdots & c_{n}\\
   \vdots & \vdots &\ddots & \vdots\\
   c_{n-1} & c_{n} & \cdots & c_{2n-2}
\end{vmatrix}.
\end{equation}
We also speak of the Hankel determinants of the power series 
$\tilde{\bf c}(x)=\sum_{k\geq 0} c_j x^j$ and write
$H_n(\tilde{\bf c}(x)) = H_n({\bf c})$.
The Hankel determinants are widely studied in Mathematics and,
in several cases, can be evaluated by  
basic determinant manipulation, $LU$-decomposition, or  Jacobi 
continued fraction (see, e.g., \cite{Kr98, Kr05, Fl80}).
In this article, we consider the {\it reduced Hankel determinants} of $\pm1$-sequences, defined as $H_n({\bf c})/2^{n-1}$, since the Hankel determinant of order $n$ of a $\pm 1$-sequence is always divisible by $2^{n-1}$.

On the other hand,  a sequence is said to be {\it  $d$-automatic} if it can be generated by a 
$d$-DFAO ({\it deterministic finite automaton with output})\cite{AS2003}. 
For an integer $d\geq 2$,  a $d$-DFAO is defined to be a $6$-tuple
$$M=(Q,\Sigma, \delta, q_0, \Delta, \tau)$$
where $Q$ is the set of states with $q_0\in Q$ being the initial state, $\Sigma=\{0,1,\ldots,d-1\}$ the input alphabet, $\delta:Q\times \Sigma\rightarrow Q$ the transition function, $\Delta$ the output alphabet, and $\tau:Q\rightarrow \Delta$ the output function. 
The $d$-DFAO $M$ generates a sequence $(c_n)_{n\geq 0}$ in the following way: for each non-negative integer $n$, the base-$d$ expansion of $n$ is read by $M$ from right to left starting from the initial state $q_0$, and the automaton moves from state to state according to its transition funciton $\delta$. When the end of the string is reached, the automaton halts in a state $q$, and the automaton outputs the symbol $c_n=\tau (q)$. 
In practice, an automaton is often presented as a multigraph with states as vertices and the transition function as directed edges labelled by the input alphabet. 

Here is an example of a $2$-automaton:

\begin{center}\begin{tikzpicture}[shorten >=1pt,node distance=2cm,on grid,auto] 
   \node[state] (i)   {$i$}; 
   \node[state] (a) [right=of i]{$a$}; 
    \path[->] 
    (i) edge [loop above] node  {0} (i)
          
    (i) edge [bend left] node {1} (a)
         
    (a) edge [loop above] node  {0} (a)

    (a) edge [bend left] node  {1} (i);
\end{tikzpicture}

$\tau(i)=0,\; \tau(a)=1.$
\end{center}
The sequence $(t(n))_n$ generated by this automaton is the {\it Thue-Morse} sequence 
whose first terms are $0110100110010110\cdots$. 

An equivalent definition for a sequence $\bf u$ to be $d$-automatic is that the {\it $d$-kernel} of $\bf{u}$, defined as
 $$\{ (u(d^k n+j))_{n\geq 0} \mid k\in \mathbb{N},\, 0\leq j\leq d^k-1 \},$$
 is finite \cite[Prop. V.3.3]{Eilenberg1974A}, \cite{Christol1979}.
 If we let $\Lambda_{i}^{(d)}$ denote the operator that sends a 
 sequence $(u(n))_{n\geq 0}$ to its subsequence $(u(d n+i))_{n\geq 0}$,
 then the $d$-kernel can be defined alternatively 
as the smallest set containing $\bf{u}$ that is stable under $\Lambda_i^{(d)}$ for $0\leq i<d-1$. We write $\Lambda_i$ instead of $\Lambda_i^{(d)}$ when the value of $d$ is clear from the context.

Automatic sequences arise naturally in various contexts. The study of automatic sequences lies at the interface of number theory, combinatorics on words, dynamic system, logic and theoretical computer science. 
We refer the readers to  \cite{AS2003} for a comprehensive exposition of the subejct.
One fundamental result is
the link between automatic sequences and algebraicity in positive characteristic, 
discovered by Christol, Kamae, Mend\`es France and Rauzy 1980 \cite{CKMFR1980}. They proved that
a series in $\mathbb{F}_q[[x]]$
is algebraic over $\mathbb{F}_q(x)$ if and  only if the sequence of its coefficients is $q$-automatic.

Hankel determinants and automatic sequences were considered totally independently, until in 1998,
when Allouche, Peyri\`ere, Wen and Wen proved that all the Hankel determinants 
of the Thue-Morse sequence are nonzero \cite{APWW1998}. 
This property allowed Bugeaud to prove that the irrationality exponents of the Thue-Morse-Mahler numbers are exactly 2 \cite{Bu2011}. 
Since then, the Hankel determinants
for several other automatic sequences, in particular, the paperfolding sequence,
the Stern sequence, the period-doubling sequence, are studied by 
Coons, Vrbik, Guo, Wu, Wen, Bugeaud, Fu, Han,  
Fokkink, Kraaikamp, and Shallit \cite{Coons2013, GWW2013, HanWu2015, FKS2017,
Han2015hankel, Han:Hankel, FuHan2015:ISSAC, BH2014}. 

Apart from their link with irrationality exponent, one reason for considering
the Hankel determinants of automatic sequences is the following:
It is known that
a formal power series is rational if and only if its Hankel determinants are
ultimately zero \cite[p.~5, Kronecker Lemma]{Salem1983}, and
the Hankel determinants of a quadratic power series over finite fields are ultimately periodic \cite{Han:Hankel}.
It is therefore reasonable to expect analogous properties from the Hankel determinants of
algebraic series. 

We now introduce the class of sequences that we will study in this article. 
Let $d\geq 2$ be a positive integer and ${\bf v}=(v_0, v_1,v_2,\ldots,v_{d-1})$ a finite $\pm 1$-sequence of length~$d$ such that $v_0=1$.
The generating polynomial of $\bf v$ is denoted by 
$\tilde{\bf v}(x)= \sum_{i=0}^{d-1} v_i x^i $.
We consider the $\pm 1$-sequence $\bf f$ defined by the following power series
\begin{equation}\label{def:Phi}
	{\tilde{\bf f}}(x) :=  \Phi(\tilde{\bf v}(x)) := \prod_{k=0}^\infty \tilde{\bf v} (x^{d^k})
\end{equation}
We see that the $n$-th term of $\bf f$ is equal to
$$
f_n=\prod_{i=1}^{d-1} v_i ^{s_{d,i}(n)},
$$
where $s_{d,i}(n)$ denotes the number of occurrences of 
the digit~$i$ in the base-$d$ representation of $n$.
Thus, the Thue--Morse power series 
is equal to $\Phi(1-x) = \prod_{k=0}^\infty(1-x^{2^k})$. 
Since the power series ${\tilde {\bf f}(x)}$ satisfies the following 
functional equation 
\begin{equation}\label{equ:f:func}
\tilde{\bf f}(x)= 
  \tilde{\bf v}(x)\prod_{k=1}^{\infty}\tilde{\bf v}\left(x^{d^{k}}\right)
  =\tilde{\bf v}(x)\tilde{\bf f}(x^{d}),
\end{equation}
the sequence ${\bf f}$  can also be defined by the 
recurrence relations
\begin{equation}\label{equ:f:recurrence}
  f_0=1,\quad f_{dn+i}=v_if_{n} \quad\text{\ for  $n\geq 0$ and $0\leq i \leq d-1$.}
\end{equation}

\medskip
We prove the following Theorem, which we will use in Section \ref{section:IE}
to obtain upper bounds for the irrationality exponent of a family of automatic
numbers.
\begin{thm}\label{thm:main}
	For each positive integer $d\geq 2$ and each $\pm 1$-vector $\bf v$ of length
	$d$, the sequence of the reduced Hankel determinants modulo 2 of the sequence
	$\bf f$ defined by $\tilde{\bf{f}}=\Phi(\tilde{\bf{v}}(x))$ is $d$-automatic.
\end{thm}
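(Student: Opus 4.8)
The plan is to show that the sequence $(H_n(\mathbf{f})/2^{n-1} \bmod 2)_{n\ge 0}$ has a finite $d$-kernel. The natural strategy is to exploit the functional equation $\tilde{\mathbf f}(x)=\tilde{\mathbf v}(x)\tilde{\mathbf f}(x^d)$ to relate Hankel determinants of orders in an arithmetic progression modulo $d$ to Hankel determinants of smaller orders, in the spirit of the Allouche--Peyri\`ere--Wen--Wen arguments for Thue--Morse and the later refinements of Bugeaud, Fu, Han and others. First I would introduce the Hankel matrix $\mathcal{H}_n(\mathbf f) = (f_{i+j})_{0\le i,j\le n-1}$ and, using \eqref{equ:f:recurrence}, perform block row/column operations that correspond to the substitution $x\mapsto x^d$. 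Writing $n = dm + r$ with $0\le r<d$, one expects the matrix $\mathcal{H}_n(\mathbf f)$ to decompose, after elementary operations whose determinant is $\pm$ a power of $2$, into a block structure whose Schur complement is (up to sign and powers of $2$) a Hankel-type matrix built from $\mathbf f$ itself but of order roughly $m$, together with correction blocks depending only on $r$ and on $\mathbf v$. The key point is that the ``$\pm$ power of $2$'' bookkeeping is exactly what makes the \emph{reduced} determinant $H_n/2^{n-1}$ behave well modulo $2$.

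The central step is therefore to establish a recurrence of the form
\begin{equation*}
\frac{H_{dm+r}(\mathbf f)}{2^{dm+r-1}} \equiv \varepsilon_r(m)\cdot \frac{H_{m+a_r}(\mathbf f)}{2^{m+a_r-1}} \pmod 2,
\end{equation*}
or more precisely a system of such relations expressing each $\Lambda_r^{(d)}$-subsequence of the reduced-determinant sequence in terms of finitely many shifts of the same sequence, with coefficients $\varepsilon_r(m)\in\{0,1\}$ that are themselves $d$-automatic (eventually periodic of period a power of $d$) in $m$. To get a finite kernel one enlarges the state set: instead of tracking only $(H_n/2^{n-1}\bmod 2)_n$, I would track a finite vector of auxiliary sequences — the reduced Hankel determinants of $\mathbf f$ together with those of the finitely many ``shifted'' or ``bordered'' sequences $(f_{n+j})_n$ for $0\le j<d$ and perhaps a bounded number of small minors — and show this whole finite packet is stable, modulo $2$, under the $\Lambda_i^{(d)}$ operators. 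Closure of this packet under the kernel operators, all computations being done in $\mathbb{F}_2$, yields finiteness of the $d$-kernel and hence $d$-automaticity by the Eilenberg/Christol criterion quoted in the introduction.

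The main obstacle, as usual in this circle of problems, is controlling the \emph{exact} $2$-adic valuation of the relevant minors so that the reduction modulo $2$ of $H_n/2^{n-1}$ is well-defined and the recurrences close up: a priori the elementary operations might produce a Schur complement whose entries are even, or introduce extra factors of $2$ that depend on $n$ in a non-automatic way. Overcoming this requires a careful structural analysis — likely an induction showing that for the sequences in our finite packet the Hankel matrices modulo $4$ (not merely modulo $2$) have a controlled form, so that dividing by $2^{n-1}$ and reducing modulo $2$ commutes with the block operations. A secondary technical point is handling the cases where intermediate Hankel determinants vanish (the analogue of the ``$H_n=0$'' gaps that must be excluded in the Thue--Morse case); here one works with the slightly larger family of minors to bridge such gaps, and the finiteness of that family is what ultimately must be verified by a finite, if intricate, case analysis depending on $d$ and $\mathbf v$.
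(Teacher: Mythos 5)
Your proposal stops exactly where the real work begins: the displayed recurrence relating $H_{dm+r}/2^{dm+r-1}$ to $H_{m+a_r}/2^{m+a_r-1}$ modulo $2$ is asserted, not proved, and you yourself flag the two obstacles (controlling the $2$-adic valuation of the Schur complement, and closing the packet of auxiliary minors) without resolving either. Moreover, the proposed shape of the recurrence is almost certainly too simple. The actual relations (computed in Section~\ref{sec:examples}) are \emph{quadratic}, e.g.\ $Z_{5n}=Z_n(X_n+X_nY_n+Y_n)$, and the auxiliary sequences $\mathbf{X},\mathbf{Y},\mathbf{U},\mathbf{V},\mathbf{W}$ that make the system close are not reduced Hankel determinants of shifted or bordered versions of $\mathbf{f}$; they are sums of permutation counts $j_{n,i}$, $k_{n,i}$ attached to the sets $J$ and $K$ of Definition~\ref{def:JK}. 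There is no evidence that the packet you propose (Hankel determinants of $(f_{n+j})_n$ plus ``a bounded number of small minors'') is stable under the $\Lambda_i^{(d)}$, and verifying such stability by the ``finite, if intricate, case analysis'' you invoke is precisely the content that is missing.

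The paper takes a different and much lighter route that sidesteps your main obstacle entirely. A single elementary column operation (subtracting adjacent columns) extracts the full factor $2^{m-1}$ in one step, because $(f_t-f_{t+1})/2=\pm\delta_t$ with $\delta_t\in\{0,1\}$; reducing modulo $2$ then turns the determinant into the permanent of a $0$--$1$ matrix, i.e.\ the count $Z_m$ of permutations $\sigma\in\Sym_m$ with $i+\sigma_i\in J$ for $i\le m-2$ (Theorem~\ref{thm:J}). No Schur complement, no mod-$4$ control, and no special treatment of vanishing determinants is needed. The recurrences then come not from block matrix manipulation but from the self-similar structure of $J$ and $K$ under base-$d$ digit operations, as produced and proved by the Fu--Han algorithm; what remains is the purely formal closure argument of Lemmas~\ref{lem1} and~\ref{lem2} showing that $\Lambda(S)\subseteq\mathfrak{A}(S\cup\sigma(S))$ forces a finite $d$-kernel for the algebra generated by $S\cup\sigma(S)\cup\sigma^2(S)$. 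To repair your argument you would either need to actually carry out the $2$-adic bookkeeping for the block decomposition (which is not known to work in this generality), or switch to the permanent/permutation-count formulation.
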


The proof of Theorem \ref{thm:main} is based on some previous works 
about the Hankel determinants developed by Fu-Han \cite{FuHan2015:ISSAC}, see
Section \ref{sec:proof}.
We give two examples 
of Theorem \ref{thm:main} in Section \ref{sec:examples}. The generating series of the two sequences are respectively
$$\prod_{k\geq 0}(1+x^{5^k} - x^{2\cdot5^k}-x^{3\cdot 5^k} + x^{4\cdot5^k})$$
and
$$\prod_{k\geq 0}(1+x^{4^k} - x^{2\cdot4^k}-x^{3\cdot 4^k} ).$$
By Theorem \ref{thm:main},
the sequence of the reduced Hankel determinants modulo~2 is $5$-automatic (resp. $4$-automatic).
The minimal automata are illustrated in Section~\ref{sec:examples}.

% >>>

\section{Proof of the Main Theorem}\label{sec:proof} % <<<
In this section we first recall some previous works developed in \cite{FuHan2015:ISSAC}
in order to explain the meaning of the notions used in the proof of the Main Theorem.
Then we prove two general Lemmas. Finally we apply these two Lemmas to the results from \cite{FuHan2015:ISSAC} to prove the Main Theorem. 

\subsection{Key notions from previous works}
In the proof of the Main Theorem, we will make use of recurrence relations between
six sequences $\bf X, Y, Z, U, V, W$.
These are notions from the previous works of Fu and Han \cite{FuHan2015:ISSAC}. In this subsection we recall
definitions and results about these sequences.
The most important thing to keep in mind for the sake of the proof of the Main Theorem 
is that $\bf Z$ is 
the sequence of reduced Hankel determinants modulo~2.

\medskip

The following two disjoint infinite sets of integers $J$ and $K$ 
associated with ${\bf v}$
play an important role 
in the proof of Theorem \ref{thm:main}.
\begin{defi}\label{def:JK}
Let
$P=\{1\leq i\leq d-1 \mid v_{i-1} \not= v_i\}$
and
$Q=\{1\leq i\leq d-1 \mid v_{i-1} = v_i\}$.
% \begin{align*}
%   P=\{1\leq i\leq d-1 \mid v_{i-1} \not= v_i\},\\
%   Q=\{1\leq i\leq d-1 \mid v_{i-1} = v_i\}.
% \end{align*}
%
If $v_{d-1}=-1$, define
\begin{align*}
  J&=\{(dn+p)d^{2k}-1\ |\ n,k\in \NN, p\in P\}\\
   &\quad\bigcup\{(dn+q)d^{2k+1}-1\ |\ n,k\in \NN, q\in Q\},\\
  K&=\{(dn+q)d^{2k}-1\ |\ n,k\in \NN, q\in Q\}\\
   &\quad\bigcup\{(dn+p)d^{2k+1}-1\ |\ n,k\in \NN, p\in P\}.\\
\noalign{\noindent If $v_{d-1}=1$, define}
J&=\{(dn+p)d^{k}-1\ |\ n,k\in \NN, p\in P\},\\
K&=\{(dn+q)d^{k}-1\ |\ n,k\in \NN, q\in Q\}.
\end{align*}
\end{defi}
\smallskip

Let $\Sym_{m}=\Sym_{\{0,1,\ldots,m-1\}}$ be the set of all permutations on
  $\{0,1,\ldots,m-1\}$. 
For $m\ge \ell \ge 0$ let
$\mathfrak{J}_{m,\ell}$ (resp. $\mathfrak{K}_{m,\ell}$) be the set of all permutations $\sigma =\sigma_0 \sigma_1 \cdots \sigma_{m-1} \in \Sym_{m}$ such that $i+\sigma_i\in J$ (resp. $i+\sigma_i\in K$) for $i\in \{0,1,\ldots,m-1\}\setminus \{\ell\}$.
Write:
\begin{align*}
    j_{m,\ell}&:=\#\mathfrak{J}_{m,\ell},
     &k_{m,\ell}&:=\#\mathfrak{K}_{m,\ell}.\\
\end{align*}
\begin{defi}\label{def:jk}
%(no constraint on $\ell+\sigma_{\ell}\in \NN$). 
We define the following sequences taking values in $\mathbb{Z}/2\mathbb{Z}$ :
\begin{align*}
    X_{n} & := \sum_{i=0}^{n-1}j_{n,i} \mod 2, &Y_{n}&:=j_{n,n} \mod 2,&&Z_{n}:=j_{n,n-1} \mod 2,\\
    U_{n} & := \sum_{i=0}^{n-1}k_{n,i} \mod 2, &V_{n}&:=k_{n,n} \mod 2,&&W_{n}:=k_{n,n-1} \mod 2,
\end{align*}
for $n\geq 1$, and $ X_0:=0, Y_0:=1, Z_0:=0, U_0:=0, V_0:=1, W_0:=0$.
\end{defi}
Notice that if $\ell=m$, 
then $\{0,1,\ldots, m-1\}\setminus\{\ell\}=\{0,1,\ldots, m-1\}$, so that
$j_{m,m}$ (resp. $k_{m,m}$) is the
number of permutations $\sigma\in \Sym_{m}$ such that 
$i+\sigma(i)\in J$ (resp. $\in K$) for all~$i$.
From the definitions above it is easy to see that $\NN= J\cup K$ and we have the following lemma.
\begin{lem}\label{lemma:f=set}
For each $t\ge 0$ the integer $\delta_{t}:=|(f_{t}-f_{t+1})/2|$ is equal to $1$ if and only if $t$ is in~$J$.
\end{lem}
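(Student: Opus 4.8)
The plan is to compute $f_{t+1}-f_t$ explicitly by a case analysis on the base-$d$ digits of $t$, using the recurrence \eqref{equ:f:recurrence} together with the combinatorial description of $J$ as a union of arithmetic-progression-like sets from Definition \ref{def:JK}. First I would write $t+1$ in base $d$ and observe that passing from $t$ to $t+1$ only affects a suffix of digits: if $t = u \cdot d^{m+1} + (d-1) + (d-1)d + \cdots + (d-1)d^{m-1} + a\, d^m$ with $0\le a\le d-2$ (so that the last $m$ digits of $t$ are all $d-1$ and the next digit is $a$), then $t+1 = u\cdot d^{m+1} + (a+1) d^m$, i.e. the trailing block of $m$ copies of $d-1$ becomes a block of $m$ zeros and the digit $a$ becomes $a+1$. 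The key point is that a zero digit contributes the factor $v_0 = 1$, so $f_t = \bigl(\prod_{j} v_{d-1}\bigr)^{?}\cdot v_a \cdot (\text{stuff from }u)$ and $f_{t+1} = v_{a+1}\cdot(\text{same stuff from }u)$, with the $m$ trailing digits contributing $v_{d-1}^{\,m}$ to $f_t$ and $v_0^{\,m}=1$ to $f_{t+1}$.

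Consequently $f_t = v_{d-1}^{\,m}\, v_a\, g$ and $f_{t+1} = v_{a+1}\, g$ for a common sign $g = \prod_i v_i^{s_{d,i}(u)}\in\{\pm1\}$, where $a+1$ ranges over $\{1,\dots,d-1\}$. Since everything is a $\pm1$, we have $\delta_t = |(f_t-f_{t+1})/2| = 1$ exactly when $f_t \ne f_{t+1}$, i.e. when $v_{d-1}^{\,m} v_a \ne v_{a+1}$, equivalently $v_{d-1}^{\,m} \ne v_a v_{a+1}$. Now split on the parity of $m$ and on whether $v_{d-1}=1$ or $-1$: when $v_{d-1}=1$ the factor $v_{d-1}^{\,m}$ is always $1$, so $\delta_t=1$ iff $v_a\ne v_{a+1}$, i.e. iff $a+1\in P$; and $t+1 = (d u + (a+1))d^m$, so $t = (d u + (a+1))d^m - 1$ with $a+1\in P$, which is precisely the membership condition $t\in J$ in the $v_{d-1}=1$ case (with $p=a+1$, $n=u$, $k=m$). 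When $v_{d-1}=-1$, the factor is $(-1)^m$, so for $m$ even $\delta_t=1$ iff $v_a\ne v_{a+1}$ (i.e. $a+1\in P$) while for $m$ odd $\delta_t=1$ iff $v_a = v_{a+1}$ (i.e. $a+1\in Q$); matching against the two unions defining $J$ in the $v_{d-1}=-1$ case (writing $m=2k$ or $m=2k+1$) gives exactly $t\in J$. This also handles the boundary case $t = $ string of all $d-1$'s correctly, since then $t+1$ is a power of $d$, $u=0$, $a+1=1$, and $1\in P$ iff $v_0\ne v_1$ as required.

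The only genuinely delicate point is bookkeeping: making sure the digit $a$ with $0\le a\le d-2$ is handled at its two extremes — $a=0$ corresponds to a digit $d-1$ immediately preceding the trailing block extending the block by one (so one must be careful that the "$m$ trailing $d-1$'s" is chosen maximally, forcing $a\ne d-1$), and $a=d-2$ is unremarkable — and that the leftover prefix $u$ is treated uniformly, including $u=0$. Once the maximality of the trailing block is fixed, the factor $g$ cancels cleanly, the analysis is a finite check over the four (sign of $v_{d-1}$) $\times$ (parity of $m$) cases, and the equivalence $\delta_t=1 \iff t\in J$ drops out; since $\NN = J\sqcup K$, this is the claimed statement.
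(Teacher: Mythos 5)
Your proof is correct, and it is the intended argument: the paper actually states Lemma \ref{lemma:f=set} without proof (it is presented as an easy consequence of the definitions), and your base-$d$ digit analysis --- writing $t$ with a maximal trailing block of $m$ digits $d-1$, cancelling the common factor $g$, and reducing to the condition $v_{d-1}^{\,m}\neq v_a v_{a+1}$ split according to the sign of $v_{d-1}$ and the parity of $m$ --- is exactly the routine verification the authors omit. The bookkeeping you flag (maximality of the trailing block, the all-$(d-1)$'s case, and the uniqueness of the representation $t+1=(dn+c)d^{k}$ with $1\le c\le d-1$, which makes membership in $J$ well defined) is handled correctly.
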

% =====================================
For the proof of the the following theorem, we use the same method as in the proof of Theorem 2.2 in \cite{FuHan2015:extended}.

\begin{thm}\label{thm:J}
  Let $\bf v$ be a $\pm 1$-sequence of length~$d$ with $v_0=1$. The sequence $\bf f$ and the set $J$ associated with $\bf v$ are defined by \eqref{def:Phi} and Definition \ref{def:JK} respectively.  Then,
	the reduced Hankel determinant 
$$H_m({\bf f})/2^{m-1} \equiv Z_m \pmod 2.$$ 
\end{thm}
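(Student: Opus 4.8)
The goal is to identify the reduced Hankel determinant $H_m(\mathbf f)/2^{m-1}$ modulo $2$ with the determinant of a matrix whose entries encode membership in $J$, and then to recognize that determinant modulo $2$ as the permanent-style count $j_{m,m}\bmod 2$, which is $Z_m$.

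My plan is as follows. \textbf{Step 1: A determinant with telescoping structure.} Since $\mathbf f$ is a $\pm1$-sequence with $f_0=1$, I would perform elementary row and column operations on the Hankel matrix $(f_{i+j})_{0\le i,j\le m-1}$ to pull out powers of $2$. Concretely, replacing each row (or column) after the first by its difference with the previous one introduces entries of the form $f_{t}-f_{t+1}\in\{0,\pm2\}$; iterating and using Lemma~\ref{lemma:f=set}, which says $|(f_t-f_{t+1})/2|=1$ exactly when $t\in J$, one extracts the factor $2^{m-1}$ and is left with a matrix $A=(a_{ij})$ over $\mathbb Z$ whose entries reduce mod $2$ to the indicator $[\,i+j\in J\,]$ after a suitable indexing shift. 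This is precisely the manipulation behind the passage from Hankel determinants to the sets $J,K$ in \cite{FuHan2015:ISSAC,FuHan2015:extended}, and I would follow that argument line for line, keeping track only of the $2$-adic valuation and the residue of the quotient.

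\textbf{Step 2: From determinant to $j_{m,m}$ modulo $2$.} Once $H_m(\mathbf f)/2^{m-1}\equiv \det A\pmod 2$ with $A$ the $0/1$ matrix $a_{ij}=[\,i+\sigma\text{-index}\in J\,]$, I would use the fact that over $\mathbb Z/2\mathbb Z$ the determinant equals the permanent:
$$\det A \equiv \sum_{\sigma\in\Sym_m}\prod_{i=0}^{m-1} a_{i,\sigma(i)} \pmod 2 = \#\{\sigma\in\Sym_m : i+\sigma(i)\in J \text{ for all }i\} \bmod 2.$$
By the definition of $\mathfrak J_{m,\ell}$ and the remark following Definition~\ref{def:jk}, that count is exactly $j_{m,m}$, whose residue mod $2$ is $Z_m$. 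The cases $v_{d-1}=1$ and $v_{d-1}=-1$ only affect the precise description of $J$ in Definition~\ref{def:JK}, not the structure of this argument, so both are handled uniformly. I also need to check the degenerate small cases $m=0,1$ against the initial values $Z_0=0$ (empty determinant convention giving $H_0=1$, $2^{-1}$ is notional but the congruence is set by fiat) and $Z_1 = j_{1,0}\bmod 2$, which matches $H_1(\mathbf f)=f_0=1$.

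\textbf{Main obstacle.} The delicate point is Step~1: verifying that the elementary operations really do yield a matrix whose reduction mod $2$ is the clean indicator pattern $[\,i+j\in J\,]$, rather than something contaminated by carries or by the $\pm$ signs of the $f_t-f_{t+1}$. The signs are irrelevant mod $2$, which helps, but one must be careful that no extra factor of $2$ is accidentally extracted or missed, i.e. that the $2$-adic valuation of $H_m(\mathbf f)$ is exactly $m-1$ on the nose (not larger), and that after dividing out $2^{m-1}$ the resulting integer matrix is genuinely integral with the asserted parity pattern. This is exactly the content that \cite{FuHan2015:extended} establishes for the analogous Thue--Morse-type situation, so I would adapt their bookkeeping of valuations; the rest of the proof is then formal via the determinant-equals-permanent identity over $\mathbb F_2$.
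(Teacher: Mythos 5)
Your overall strategy (difference the columns to extract $2^{m-1}$, then read the resulting determinant mod~$2$ as a permanent counting permutations constrained by $J$) is the paper's strategy, but Step~2 lands on the wrong object, and the error traces back to a mis-statement in Step~1. From $m$ columns you can only form $m-1$ consecutive differences: one column necessarily survives undifferenced, and since $\mathbf f$ is a $\pm1$-sequence that column reduces mod~$2$ to the all-ones column. So the matrix you obtain is \emph{not} the full indicator matrix $a_{ij}=[\,i+j\in J\,]$; it has $\delta_{i+j}$ in columns $0,\dots,m-2$ and $1$'s in column $m-1$. The permanent of that matrix mod~$2$ counts permutations $\sigma$ with $i+\sigma_i\in J$ for all $i$ \emph{except one index}, i.e.\ it is $j_{m,m-1}\bmod 2=Z_m$ by Definition~\ref{def:jk}. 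Your count, $j_{m,m}\bmod 2$, is by that same definition $Y_m$, a genuinely different sequence; the assertion that ``$j_{m,m}$, whose residue mod $2$ is $Z_m$'' contradicts the definition. The discrepancy is visible already at $m=1$: $H_1(\mathbf f)=f_0=1$, while in Example~1 one checks $0\in K$ (since $v_0=v_1$), so $j_{1,1}=Y_1=0\neq 1=Z_1=j_{1,0}$. No amount of further row/column differencing recovers the full indicator matrix (second differences are no longer controlled by Lemma~\ref{lemma:f=set}), so the fix is not cosmetic: you must keep the un-differenced column and drop one constraint from the permutation count.

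Two smaller points. First, your declared ``main obstacle'' --- that the $2$-adic valuation of $H_m(\mathbf f)$ be exactly $m-1$ --- is not needed and is in fact false in general ($H_m$ is frequently $0$, as the tables in Section~\ref{sec:examples} show). The theorem only asserts a congruence for the quotient $H_m/2^{m-1}$, so plain divisibility, which is immediate from the $m-1$ difference columns each being divisible by $2$, suffices. Second, there is no ``iterating'': a single round of column differences is all that is performed and all that is possible.
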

%The proof of theorem \ref{thm:J} is given in Appendix \ref{sec:appen1}.

\begin{proof}
Let $m$ be a positive integer. By means of elementary transformations the Hankel determinant $H_{m}({\bf f})$ is equal to 
\begin{align*}
H_{m}(\bf f)
	&=
	\left|
	\begin{matrix}
	f_0 		& f_1 	& \cdots & f_{m-1}	\\
	f_1 		& f_2 	& \cdots & f_{m}		\\
	\vdots 	&\vdots & \ddots & \vdots		\\
	f_{m-1}	& f_{m}	& \cdots & f_{2m-2}
	\end{matrix}\right|\\
&=  2^{m-1} \times
 	\left|\begin{matrix}
	\frac{f_0-f_1}{2} 		& \frac{f_1-f_2}{2} 			& \cdots & \frac{f_{m-2}-f_{m-1}}{2}	& f_{m-1}		\\
	\frac{f_1-f_2}{2} 		& \frac{f_2-f_3}{2} 			& \cdots & \frac{f_{m-1}-f_{m}}{2} 		& f_{m}			\\
	\vdots 								& \vdots 									& \ddots & \vdots											& \vdots		\\
	\frac{f_{m-1}-f_m}{2}	& \frac{f_{m}-f_{m+1}}{2}	& \cdots & \frac{f_{2m-3}-f_{2m-2}}{2}& f_{2m-2}
	\end{matrix}\right|. 
\end{align*}
By Lemma \ref{lemma:f=set}, the reduced Hankel determinant is congruent modulo 2 to 
\begin{equation}\label{eqn:det}
\frac{H_{m}(\bf f)}{2^{m-1}}\equiv
\left|\begin{matrix}
\delta_0 		& \delta_1 	& \cdots & \delta_{m-2} 	& 1	  		\\
\delta_1 		& \delta_2 	& \cdots & \delta_{m-1} 	& 1				\\
\vdots 	&\vdots & \ddots & \vdots 	& \vdots	\\
\delta_{m-1}	& \delta_{m}	& \cdots & \delta_{2m-3} & 1
\end{matrix}\right|
\pmod2.
\end{equation}
By the very definition of a determinant or the Leibniz formula, the determinant
occurring on the right-hand side of the congruence \eqref{eqn:det}
is equal to
\begin{equation}\label{eqn:sumt}
	S_m:=\sum_{\sigma\in \Sym_m}(-1)^{\inv(\sigma)}\delta_{0+\sigma_0}\delta_{1+\sigma_1}\cdots \delta_{m-2+\sigma_{m-2}},
\end{equation}
where $\inv(\sigma)$ is the number of inversions of the permutation $\sigma$.
By Lemma \ref{lemma:f=set} the product $\delta_{0+\sigma_0}\delta_{1+\sigma_1}\cdots \delta_{m-2+\sigma_{m-2}}$ is equal to 1 if $i+\sigma_i\in J$ for $i=0,1,\ldots,m-2$, and to 0 otherwise. 
Hence, the summation $S_m$ 
is congruent modulo 2 to the number of permutations $\sigma\in\Sym_{m}$ such that $i+\sigma_{i}\in J$ for all $i=0,1,\ldots,m-2$, 
	which is exactly $Z_m$ by Definition \ref{def:jk}.
\end{proof}

In \cite{FuHan2015:ISSAC} an algorithm was described for {\it finding} and also {\it proving} 
a list of recurrence relations between $X_n, Y_n, Z_n, U_n, V_n, W_n$. 
The program {\tt Apwen.py} is an implementation of this algorithm in Python.
Using these relations, Fu-Han successfully proved that several sequences are Apwenian (i.e., the reduced Hankel determinants $Z_m$ are odd for all $m\in\NN$).

\medskip

\subsection{Two general Lemmas}

In this subsection we establish two general Lemmas that will be used in the proof.

For a sequence $\bf u$, we let $\sigma(\bf u)$ denote the shifted sequence 
$\sigma(\mathbf{u}):n\mapsto u(n+1)$. 
Let $S$ be a set of sequences, we let $\sigma(S)$ denote the set 
$\{\sigma(\mathbf{ u}) \mid \mathbf{u}\in S  \}$ and $\Lambda(S)$ the set
$\{\Lambda_j(\mathbf{u})\mid j=0,\ldots,d-1, \mathbf{u}\in S\}$. 
If $S$ is a set of sequences over $\mathbb{Z}/2\mathbb{Z}$, we let $\mathfrak{A}(S)$ denote the $\mathbb{Z}/2\mathbb{Z}$-algebra generated by elements of $S$.

\begin{lem}\label{lem1}
Let $S$  be a set of sequences over $\mathbb{Z}/2\mathbb{Z}$, 
if $\Lambda(S)\subseteq \mathfrak{A}(S\cup \sigma(S))$, 
then $\Lambda(\mathfrak{A}(S\cup\sigma(S)\cup\sigma^2(S)))\subseteq \mathfrak{A}
( S\cup\sigma(S)\cup\sigma^2(S))$.
\end{lem}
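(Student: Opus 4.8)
The plan is to show that applying any $\Lambda_j$ to a generator of $\mathfrak{A}(S\cup\sigma(S)\cup\sigma^2(S))$ lands back inside that same algebra, and then extend this from generators to the whole algebra using the fact that each $\Lambda_j$ is a ring homomorphism on sequences over $\mathbb{Z}/2\mathbb{Z}$. Indeed, over $\mathbb{Z}/2\mathbb{Z}$ one has $\Lambda_j(\mathbf{u}\mathbf{w}) = \Lambda_j(\mathbf{u})\Lambda_j(\mathbf{w})$ and $\Lambda_j(\mathbf{u}+\mathbf{w}) = \Lambda_j(\mathbf{u})+\Lambda_j(\mathbf{w})$ by reading off coefficients (this uses nothing about characteristic $2$ for the product, and the sum is immediate), so it suffices to check the claim on the generating set $S\cup\sigma(S)\cup\sigma^2(S)$.

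For generators in $S$: by hypothesis $\Lambda_j(\mathbf{u})\in\mathfrak{A}(S\cup\sigma(S))\subseteq\mathfrak{A}(S\cup\sigma(S)\cup\sigma^2(S))$ for $\mathbf{u}\in S$, so these are fine. The real content is to handle $\sigma(S)$ and $\sigma^2(S)$. The key elementary identity is the commutation rule between $\Lambda_j$ and $\sigma$: unwinding definitions, $\Lambda_j(\sigma(\mathbf{u}))(n) = \sigma(\mathbf{u})(dn+j) = \mathbf{u}(dn+j+1)$. When $j<d-1$ this equals $\mathbf{u}(dn+(j+1)) = \Lambda_{j+1}(\mathbf{u})(n)$, i.e. $\Lambda_j\circ\sigma = \Lambda_{j+1}$ on sequences, for $0\le j\le d-2$. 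When $j=d-1$ we instead get $\mathbf{u}(dn+d) = \mathbf{u}(d(n+1)) = \Lambda_0(\mathbf{u})(n+1) = \sigma(\Lambda_0(\mathbf{u}))(n)$, i.e. $\Lambda_{d-1}\circ\sigma = \sigma\circ\Lambda_0$. I would record these two identities first as a short observation.

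Now I apply them. For $\mathbf{u}\in S$ and $0\le j\le d-2$: $\Lambda_j(\sigma(\mathbf{u})) = \Lambda_{j+1}(\mathbf{u})\in\mathfrak{A}(S\cup\sigma(S))$ by hypothesis. For $j=d-1$: $\Lambda_{d-1}(\sigma(\mathbf{u})) = \sigma(\Lambda_0(\mathbf{u}))$; since $\Lambda_0(\mathbf{u})\in\mathfrak{A}(S\cup\sigma(S))$ is a polynomial (over $\mathbb{Z}/2\mathbb{Z}$) in elements of $S\cup\sigma(S)$, and $\sigma$ is also a ring homomorphism, $\sigma(\Lambda_0(\mathbf{u}))$ is a polynomial in elements of $\sigma(S)\cup\sigma^2(S)\subseteq S\cup\sigma(S)\cup\sigma^2(S)$, hence lies in $\mathfrak{A}(S\cup\sigma(S)\cup\sigma^2(S))$. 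For $\sigma^2(S)$ one iterates: writing $\sigma^2(\mathbf{u}) = \sigma(\sigma(\mathbf{u}))$, for $0\le j\le d-2$ we get $\Lambda_j(\sigma^2(\mathbf{u})) = \Lambda_{j+1}(\sigma(\mathbf{u}))$, which has already been shown to lie in the target algebra (it is a $\Lambda$ applied to a generator from $\sigma(S)$); and for $j=d-1$, $\Lambda_{d-1}(\sigma^2(\mathbf{u})) = \sigma(\Lambda_0(\sigma(\mathbf{u})))$, where $\Lambda_0(\sigma(\mathbf{u})) = \Lambda_1(\mathbf{u})\in\mathfrak{A}(S\cup\sigma(S))$, so applying the homomorphism $\sigma$ again gives an element of $\mathfrak{A}(S\cup\sigma(S)\cup\sigma^2(S))$. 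Thus $\Lambda(S\cup\sigma(S)\cup\sigma^2(S))\subseteq\mathfrak{A}(S\cup\sigma(S)\cup\sigma^2(S))$, and by the homomorphism property $\Lambda$ of the whole algebra is contained in the algebra, proving the Lemma.

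I expect the main obstacle to be purely bookkeeping: being careful that the hypothesis only gives $\Lambda(S)\subseteq\mathfrak{A}(S\cup\sigma(S))$ (one level of shift), so that every time a $\Lambda_{d-1}\circ\sigma$ produces a leftover $\sigma$, that shift must be absorbed by enlarging $S\cup\sigma(S)$ to include $\sigma^2(S)$ — which is exactly why the statement is phrased with $\sigma^2$ and not just $\sigma$. One should also double-check that applying $\sigma$ to a $\mathbb{Z}/2\mathbb{Z}$-polynomial expression in generators yields a $\mathbb{Z}/2\mathbb{Z}$-polynomial in the shifted generators, which is immediate since $\sigma$ is a ring homomorphism fixing the scalars. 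No deep ideas are needed beyond the two commutation identities and the homomorphism property.
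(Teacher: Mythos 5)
Your proposal is correct and follows essentially the same route as the paper's proof: reduce to the generators $S\cup\sigma(S)\cup\sigma^2(S)$ via the homomorphism property of $\Lambda_j$, then use the commutation identities $\Lambda_j\circ\sigma=\Lambda_{j+1}$ (for $j\le d-2$) and $\Lambda_{d-1}\circ\sigma=\sigma\circ\Lambda_0$, with the hypothesis absorbing the leftover shift into $\sigma^2(S)$. The only cosmetic difference is that you obtain the $\sigma^2(S)$ case by iterating the one-step identities, whereas the paper states the two-step identities $\Lambda_{d-2}(\sigma^2(\mathbf{u}))=\sigma(\Lambda_0(\mathbf{u}))$ and $\Lambda_{d-1}(\sigma^2(\mathbf{u}))=\sigma(\Lambda_1(\mathbf{u}))$ directly.
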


\begin{proof}
	Since $\Lambda(\mathfrak{A}(S\cup\sigma(S)\cup\sigma^2(S)))\subseteq \mathfrak{A}(\Lambda(S\cup\sigma(S)\cup\sigma^2(S)))$, we only need to show that $\Lambda(S\cup\sigma(S)\cup\sigma^2(S))\subseteq  \mathfrak{A}(S\cup\sigma(S)\cup\sigma^2(S))$.

	By assumption, $\Lambda(S)\subseteq \mathfrak{A}(S\cup\sigma S)\subseteq \mathfrak{A}(S\cup\sigma(S)\cup\sigma^2(S))$.

	To show that $\Lambda(\sigma(S))\subseteq \mathfrak{A}(S\cup\sigma(S)\cup\sigma^2(S))$, let $\mathbf{u}$ be an element of $S$. For $j=0,\ldots,d-2$, we have $\Lambda_j(\sigma(\mathbf{u}))=\Lambda_{j+1}(\mathbf{u})$. For $j=d-1$, we have $\Lambda_j(\sigma(\mathbf{u}))=\sigma(\Lambda_0(\mathbf{u}))$. Since $\Lambda_0(\mathbf{u})\in \mathfrak{A}(S\cup\sigma(S))$ by assumption, $\sigma(\Lambda_0(\mathbf{u}))\in \sigma(\mathfrak{A}(S\cup\sigma(S)))=\mathfrak{A}(\sigma(S)\cup\sigma^2(S))\subseteq \mathfrak{A}(S\cup\sigma(S)\cup\sigma^2(S))$.

	To show that $\Lambda(\sigma^2(S))\subseteq \mathfrak{A}(S\cup\sigma(S)\cup\sigma^2(S))$, let $\mathbf{u}$ be an element of $S$. For $j=0,\ldots,d-3$, we have $\Lambda_j(\sigma^2(\mathbf{u}))=\Lambda_{j+2}(\mathbf{u})$. Besides, $\Lambda_{d-2}(\sigma^2(\mathbf{u}))=\sigma(\Lambda_0(\mathbf{u}))$ and $\Lambda_{d-1}(\sigma^2(\mathbf{u}))=\sigma(\Lambda_1(\mathbf{u}))$. These are elements of $\mathfrak{A}(S\cup\sigma(S)\cup\sigma^2(S)$ for the same reason as before.
\end{proof}
\begin{lem}\label{lem2}
 Let $\mathbf{X}^i$, $i=1,\ldots,\ell$ be sequences taking values in a finite
 ring $R$, and $M_j$ be $\ell\times \ell$ matrices with entries in $R$ for $j=0,\ldots, d-1$.
 We denote by $\mathbf{A}$ the sequence of column vectors $A(n)=(X^1(n),\ldots,X^\ell(n))^T$. If 
 $$A(d n+j)=M_jA(n) \mbox{ for all } j=0,\ldots,d-1, \text{ and  all } n\in \mathbb{N},$$
 then $\mathbf{X}^i$ is $d$-automatic.
 \end{lem}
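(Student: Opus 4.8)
The plan is to show that the set of sequences obtained from the $\mathbf{X}^i$ by repeatedly applying the operators $\Lambda_j^{(d)}$ is finite, i.e.\ that the $d$-kernel of each $\mathbf{X}^i$ is finite; by the Eilenberg/Christol characterization recalled in the introduction this is equivalent to $d$-automaticity. The key observation is that the hypothesis $A(dn+j) = M_j A(n)$ can be rewritten in terms of the kernel operator as
\begin{equation*}
\Lambda_j^{(d)}(\mathbf{A}) = M_j \mathbf{A},
\end{equation*}
where $\Lambda_j^{(d)}$ is applied componentwise to the vector of sequences $\mathbf{A}$. Iterating, for any finite word $w = j_1 j_2 \cdots j_r$ over $\Sigma = \{0,\ldots,d-1\}$ one gets $\Lambda_{j_r}^{(d)} \cdots \Lambda_{j_1}^{(d)}(\mathbf{A}) = M_w \mathbf{A}$, where $M_w$ is the corresponding product of the matrices $M_{j_1},\ldots,M_{j_r}$ taken in the appropriate order. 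Hence every element of the $d$-kernel of $\mathbf{X}^i$ is of the form $\mathbf{e}_i^T M_w \mathbf{A}$ for some word $w$, where $\mathbf{e}_i$ is the $i$-th standard basis vector.

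First I would make the reduction to finiteness of the kernel precise: the $d$-kernel of $\mathbf{X}^i$ is contained in the set of the $i$-th components of the vector sequences $M_w \mathbf{A}$ as $w$ ranges over all finite words (including the empty word, which gives $\mathbf{A}$ itself). Next I would observe that since $R$ is finite, the set of $\ell \times \ell$ matrices over $R$ is finite, so the set $\{M_w \mid w \in \Sigma^*\}$ — being a subset of this finite set — is finite; it is in fact the (finite) submonoid of the matrix monoid generated by $M_0,\ldots,M_{d-1}$. Therefore there are only finitely many distinct vector sequences $M_w \mathbf{A}$, and a fortiori only finitely many distinct $i$-th components. This shows the $d$-kernel of each $\mathbf{X}^i$ is finite, hence $\mathbf{X}^i$ is $d$-automatic.

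There is essentially no serious obstacle here; the only point requiring a little care is the bookkeeping of the order of composition — whether reading the base-$d$ digits of $n$ from the least significant end corresponds to multiplying the $M_j$ on the left or on the right — but since we only need the \emph{set} of products $\{M_w\}$ to be finite, and that holds in either convention because $R$ and hence the matrix monoid is finite, the precise convention is immaterial for the conclusion. One could alternatively phrase the argument via an explicit $d$-DFAO whose states are the pairs $(\text{row index } i, \text{ matrix } M_w)$ together with $\mathbf{A}$'s initial value, but the kernel formulation is cleaner and suffices.
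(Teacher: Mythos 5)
Your proposal is correct and follows essentially the same route as the paper: rewrite the hypothesis as $\Lambda_j\mathbf{A}=M_j\mathbf{A}$, iterate to see that every kernel element is $M_w\mathbf{A}$ for a product $M_w$ of the given matrices, and conclude from the finiteness of $M_{\ell\times\ell}(R)$ that the $d$-kernel is finite. The only cosmetic difference is that the paper passes from $\mathbf{A}$ (a sequence over the finite alphabet $R^\ell$) to the components via a coding, whereas you project with $\mathbf{e}_i^T$ directly; both are standard and equivalent.
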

 \begin{proof}
  As $d$-automaticity is preserved by codings of the output alphabet, we only need to show that the sequence $\mathbf{A}$ over the finite alphabet 
  $R^\ell$ is $d$-automatic. 

By assumption, for all $j\in\{0,\ldots,d-1\}$, we have
$$M_jA=\Lambda_jA.$$
Therefore for all $i,j\in\{0,\ldots,d-1\}$, 
$$M_i M_j A= M_i \Lambda_j A=\Lambda_j M_i A=\Lambda_j\Lambda_i A.$$
By induction, for any finite word $w=w_1\cdots w_m$ over the alphabet $\{0,\ldots,d-1\}$, 
$$M_{w_1}\cdots M_{w_m}A=\Lambda_{w_m}\cdots \Lambda_{w_1}A.$$
Therefore the $d$-kernel of $\mathbf{A}$ is the set
  $$\{ M\mathbf{A} \mid M\in \{ M_0,\ldots,M_{d-1} \}^* \},$$
  which is a subset of the finite set 
  $\{M\mathbf{A} \mid M\in M_{\ell\times \ell}(R)\}$.
 \end{proof}

 \subsection{The proof}
\begin{proof}[Proof of Theorem \ref{thm:main}]
We notice that the algorithm described in \cite{FuHan2015:ISSAC} is also valid for other $\pm1$-sequences of form \eqref{def:Phi}
than Apwenian sequences,
and it will produce a list of  recurrence relations similar to those found in the two examples in
Section~\ref{sec:examples}. 
More precisely, if we denote by $S$ the set $\{{\bf X,Y,Z,U,V,W}\}$  of six sequences 
introduced in Definition \ref{def:jk}, then 
according to Theorem~4.1 of \cite{FuHan2015:ISSAC}, we have recurrence relations that
express $\Lambda_i {\bf S} $ in terms of the $\mathbb{Z}/2\mathbb{Z}$-linear combinations
of products of elements in $S\cup \sigma(S)$ for all $i=0,1,\ldots, d-1$
and all ${\bf S}\in S$. 
In other words, we have 
\begin{equation}\label{rel:S}
	\Lambda(S)\subseteq \mathfrak{A}(S\cup \sigma(S)).
\end{equation}
	According to Lemma \ref{lem1}, this implies that $\Lambda(\mathfrak{A}(S\cup\sigma(S)\cup
	\sigma^2(S)))\subseteq \mathfrak{A}(S\cup\sigma(S)\cup \sigma^2(S)))$.
Thus if we define $\bf A$ to be the vector whose components are all the
elements in
$S\cup\sigma(S)\cup\sigma^2(S)$ and all their products, then there exists
matrices
$M_0, M_1, \ldots, M_{d-1}$ that satisfy the hypothesis of Lemma \ref{lem2}.
Therefore all the components of $\bf A$, and in particular $\bf Z$, are $d$-automatic.  \end{proof}

The proofs of Lemma \ref{lem1} and Lemma \ref{lem2} being constructive, 
we may implement them to find a $d$-DFAO  that generates the sequence $\mathbf{Z}$. 
This is done by the Python program {\tt ASHankel.py} 
which takes as input the relations and initial values generated by {\tt Apwen.py}
and computes the minimal $d$-DFAO of $\mathbf{Z}$. 

%>>>

\section{Two examples}\label{sec:examples}
\subsection{Example 1}\label{example1}%<<<
Take ${\bf v}=(1, 1, -1, -1, 1)$ with $d=5$ and $v_{d-1}=1$. 
Then,  the corresponding infinite $\pm1$-sequence ${\bf f}$ is equal 
to
$$\Phi(1+z-z^2-z^3+z^4)=\prod_{k\geq 0}(1+x^{5^k} - x^{2\cdot5^k}-x^{3\cdot 5^k} + x^{4\cdot5^k}).$$
The Python program {\tt Apwen.py} finds and proves the following recurrences:
\begin{align*}
X_{5n+0}&= X_{n},  &         Y_{5n+0}&= Y_n,\\
X_{5n+1}&= Z_{n+1} Y_{n},  & Y_{5n+1}&= Z_{n+1} X_n (Y_{n}+1),\\
X_{5n+2}&= 0,  &             Y_{5n+2}&= Z_{n+1} Y_{n},\\
X_{5n+3}&= 0,  &              Y_{5n+3}&= Z_{n+1} Y_{n+1},\\
	X_{5n+4}&= Z_{n+1} Y_{n+1},  & Y_{5n+4}&= Z_{n+1} X_{n+1}(Y_{n+1}+1),\\
	\noalign{\medskip}
Z_{5n+0}&= \rlap{$Z_{n}(X_n+X_nY_n+Y_n),$} &\\
Z_{5n+1}&= \rlap{$Z_{n+1}(X_n+X_nY_n+Y_n),$} &\\
Z_{5n+2}&= \rlap{$Z_{n+1} Y_n,$} &\\
Z_{5n+3}&= \rlap{$0,$} &\\
Z_{5n+4}&= \rlap{$Z_{n+1} Y_{n+1},$} &
\end{align*}
with the initial values:
%$$X_0=0,\,  X_1=1,\, X_2=0, \
%	Y_0=1,\,  Y_1=0,\, Y_2=1, \
%	Z_0=0,\,  Z_1=1,\, Z_2=1.
%$$
$$X_0=0,\,  X_1=1,\,
	Y_0=1,\,  Y_1=0,\, 
	Z_0=0,\,  Z_1=1.
$$

	Recall that $\bf Z$ is the sequence of the reduced Hankel determinants modulo $2$.
By Theorem \ref{thm:main}, the sequence $\mathbf{Z}$ 
is $5$-automatic, and its minimal automaton is produced by the program {\tt ASHankel.py} (See Figure 1). 

\begin{figure}
\begin{tikzpicture}[shorten >=1pt,node distance=3cm,on grid,auto] 
  \node[state, initial](a) {$a$};
  \node[state] (d)[below right=of a] {$d$};
  \node[state] (b)[above right=of d] {$b$};
  \node[state] (e)[below left=of a] {$e$};
  \node[state] (c)[below right=of b] {$c$};
   \path[->]
(a) edge [loop below] node {0} ()
(a) edge [bend left=10] node {1} (b)
(a) edge [bend left=70] node {2} (c)
(a) edge [bend left] node [above]{3} (d)
(a) edge node [left] {4} (e)
(b) edge [loop below] node {0,4} ()
(b) edge node {1} (c)
(b) edge node {2} (d)
(b) edge [bend left=6] node [below] {3} (e)
(c) edge [loop right] node {0,4} ()
(c) edge node {1,2} (d)
(c) edge [bend left=50] node [above ]{3} (e)
(d) edge [loop below] node {0,1,2,3,4} ()
(e) edge [loop left] node {0,4} ()
(e) edge [bend right=80 ]  node [below] {1} (c)
(e) edge [bend right=10] node [below] {2,3} (d)
		;
\end{tikzpicture}
\caption{Minimal automaton for Example 1 where
the output function is $(a,b,c,d,e)\mapsto (0,1,1,0,0)$.}
\end{figure}
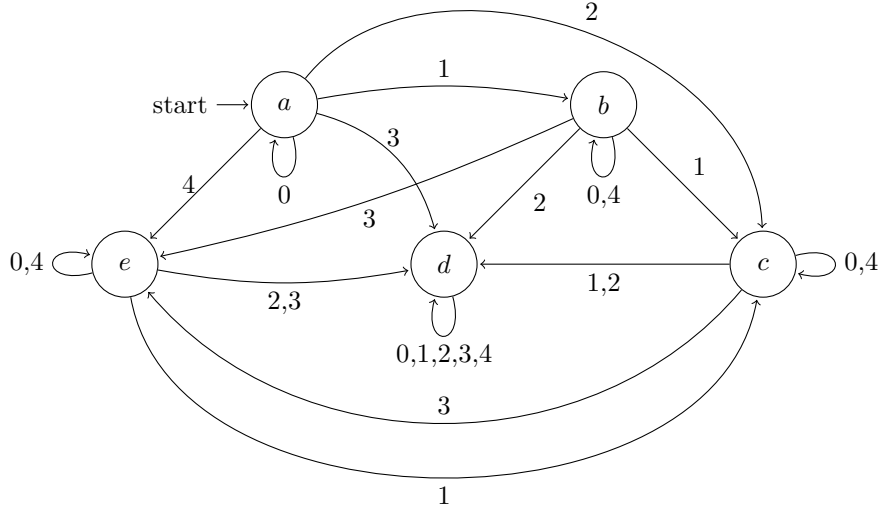

What the program {\tt ASHankel.py} does can be decomposed into the following steps: First it expands the given recurrence relations to the relations $\Lambda_i \mathbf{A}=M_i \mathbf{A}$ for $i=0,1,2,3,4$, where $M_i$ are matrices and $\mathbf{A}$ is a vector of dimension $21$ whose components are $\mathbf{X}$, $\mathbf{Y}$, $\mathbf{Z}$, $\sigma(\mathbf{X})$, $\sigma(\mathbf{Y})$, $\sigma(\mathbf{Z})$, $\sigma^2(\mathbf{X})$, $\sigma^2(\mathbf{Y})$, $\sigma^2(\mathbf{Z})$ and some of their products.  Then, by calculating all possible products of the matrices $M_i$ for $i=0,1,2,3,4$,
it finds a set $K$ of size $43$ that maps onto  the $5$-kernel of the sequence $\mathbf{A}$, as well as the transition function.
After that it reduces the set $K$ and the transition function to a smaller set $SK$ of size $9$  and the transition function $\phi:SK\times \{0,1,2,3,4\}\rightarrow SK$ by projecting each product of matrix to the row vector that corresponds to $\mathbf{Z}$. The set $SK$ maps onto the $5$-kernel of $\mathbf{Z}$ naturally. In this way we obtain a $5$-DFAO with $SK$ as the set of states, $\phi$ as the transition function, and as output function 
the map from 
$SK$ 
to the first term of the corresponding sequence in the $5$-kernel of~$\mathbf{Z}$. This $5$-DFAO generates the sequence $\mathbf{Z}$. Finally this automaton 
is converted to the minimal automaton 
of~$\mathbf{Z}$. 

In the following table, we list the first values of $H_n$ and the sequence $\bf u$ 
defined by the $5$-DFAO in Figure 1.
We verify that $u_n=H_n/2^{n-1} \mod 2=Z_n$.
\begin{table}[h]\label{table5}
	\begin{tabular}{ccccccccccccccccccc}
		\hline
		$n$ & 0 & 1 & 2 &3&4&5&6&7&8&9&10&11&12&13&14\\
		\hline
		$H_n$ & 1 & 1 & -2 &0&0&16&-32&-128&256&-1280&-6656&0&0&0&0\\
		$	u_n$ & 0 & 1 & 1 &0&0&1&1&0&0&1&1&0&0&0&0\\
		\hline
	\end{tabular}
\end{table}
% >>>

\subsection{Example 2}\label{example2} % <<<
Take ${\bf v}=(1, 1, -1, -1)$
with $d=4$ and $v_{d-1}=-1$. Then,  the corresponding infinite $\pm1$-sequence ${\bf f}$ is equal to
$$\Phi(1+z-z^2-z^3)=\prod_{k\geq 0}(1+x^{4^k} - x^{2\cdot4^k}-x^{3\cdot 4^k} ).$$
As explained in \cite{FuHan2015:extended}, 
we have six sequences $\mathbf{X}, \mathbf{Y}, \mathbf{Z}, \mathbf{U}, \mathbf{V}, \mathbf{W}$, since
$v_{d-1}=-1$.  Recall that $\mathbf{Z}$  is the sequence of the reduced Hankel determinants modulo 2.
The program {\tt Apwen.py} finds and proves the following recurrences:
\begin{align*}
	X_{4n+0}	&= 0,                       &   U_{4n+0}	&= 0							,	     \\
	X_{4n+1}	&= W_{n+1}(U_n+V_n),        &   U_{4n+1}	&= Z_{n+1} Y_n		,	      \\
	X_{4n+2}	&= 0,                       &   U_{4n+2}	&= 0							,	     \\
	X_{4n+3}	&= W_{n+1}(U_{n+1}+V_{n+1}),&   U_{4n+3}	&= Z_{n+1} Y_{n+1}	,	   \\
	\noalign{\medskip}                             \noalign{\medskip}
	Y_{4n+0}	&= U_n+V_n,                 &   V_{4n+0}	&= Y_n 			,           \\
	Y_{4n+1}	&= 0	,                      &   V_{4n+1}	&= Z_{n+1} Y_{n} 	,       \\
	Y_{4n+2}	&= W_{n+1},                 &   V_{4n+2}	&= Z_{n+1}  		,          \\
	Y_{4n+3}	&= 0	,                      &   V_{4n+3}	&= Z_{n+1} Y_{n+1} ,         \\
	\noalign{\medskip}                             \noalign{\medskip}
	Z_{4n+0}	&= W_{n} (U_n+V_n)	,         &  W_{4n+0}	&= Z_{n} Y_n	,             \\
	Z_{4n+1}	&= W_{n+1} (U_n+V_n),        &  W_{4n+1}	&= Z_{n+1} Y_n,             \\
	Z_{4n+1}	&= W_{n+1} (U_n+V_n),        &  W_{4n+2}	&= Z_{n+1} Y_n,             \\
	Z_{4n+1}	&= W_{n+1} (U_{n+1}+V_{n+1}), & W_{4n+3}	&= Z_{n+1} Y_{n+1}    , 
\end{align*}
with the initial values: 
\begin{align*}
X_0&=0,\,& Y_0&=1,\,& Z_0&=0,\,& U_0&=0,\,& V_0&=1,\,& W_0&=0,& \\
X_1&=1,\,& Y_1&=0,\,& Z_1&=1,\,& U_1&=1,\,& V_1&=1,\,& W_1&=1.& \\
%X_2&=0,\,& Y_2&=1,\,& Z_2&=1,\,& U_2&=0,\,& V_2&=1,\,& W_2&=1.& \\
\end{align*}

\begin{figure}
\begin{tikzpicture}[shorten >=1pt,node distance=3cm,on grid,auto] 
  \node[state, initial](a) {$a$};
  \node[state] (e)[below right=of a] {$e$};
  \node[state] (d)[below left=of a] {$d$};
  \node[state] (b)[above right=of e] {$b$};
  \node[state] (c)[below right=of b] {$c$};
   \path[->]
(a) edge node {0} (b)
(a) edge [bend left=70] node {1,2} (c)
(a) edge node {3} (d)
(b) edge [loop below] node {0} ()
(b) edge node [left] {1,3} (e)
(b) edge node [below] {2} (c)
(c) edge [loop right] node {0} ()
(c) edge node {1,3} (e)
(c) edge [bend left=60] node [above] {2} (d)
(d) edge node {0,2} (e)
(d) edge [bend right=90] node [below] {1} (c)
(d) edge [loop left] node {3} ()
(e) edge [loop below] node  {0,1,2,3} ()
		;
\end{tikzpicture}

\caption{Minimal automaton for Example 2
where the
output function: $(a,b,c,d,e)\mapsto (0,0,1,0,0)$.
	}
\end{figure}
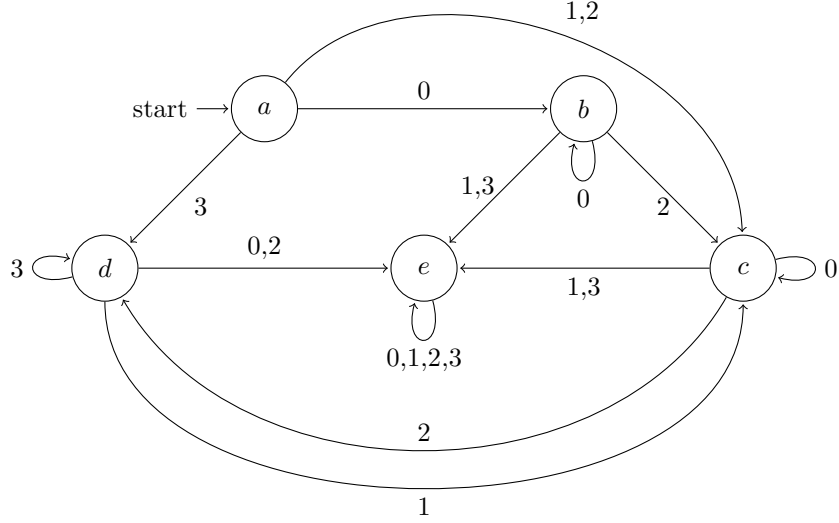
By Theorem \ref{thm:main}, the sequence ${\bf Z}$
is 4-automatic. As in the previous example, using the recurrence relations and initial values above, 
the Python program {\tt ASHankel.py} computes the minimal automaton of $\mathbf{Z}$ (See Figure 2).
In the following table, we list the first values of $H_n$ and the sequence $\bf u$ 
defined by the $4$-DFAO in Figure 2.
We verify that $u_n=H_n/2^{n-1} \mod 2=Z_n$.
\begin{table}[h]\label{table4}
	\begin{tabular}{ccccccccccccccccccccccc}
		\hline
		$n$ & 0 & 1 & 2 & 3&4&5&6&7&8&9&10&11&12&13&14&15  \\
		\hline
		$H_n$&1 & 1 & -2 & 0 &0&0&0&64&128&0&0&0&0&0&0&0 \\
		$u_n$&0 & 1 & 1 & 0 &0&0&0&1&1&0&0&0&0&0&0&0\\  
		\hline
	\end{tabular}
\end{table}

% >>>

\section{Application to Irrationality Exponent}\label{section:IE}
Our result about the automaticity of the sequence $H_n/2^{n-1} \pmod 2$ enables
us to obtain upper bounds of irrationality exponents of a family of automatic
numbers.
Let $\xi$ be an irrational real number. The irrationality exponent $\mu(\xi)$ of
$\xi$ measures the approximation rate of $\xi$ by rationals. It is defined as
the supremum of the real numbers $\mu$ such that the inequality 
$$\left|\xi-\frac{p}{q}\right|<\frac{1}{q^{\mu}}$$
holds for infinitely many rational numbers $p/q$.
In this section we will make use of the following method developed 
by Bugeaud et al.
in \cite{BHWY2015}.

\begin{thm}\label{th:BHWY}
Let $d\geq 2$ be an integer, and $(c_j)_{j\geq 0}$ be an integer sequence such
that $f(z)=\sum\limits_{j=0}^{\infty} c_j z^j$ converges inside the unit disk.
Suppose that there exist integer polynomials $A(z)$, $B(z)$, $C(z)$ and $D(z)$
such that
$$f(z)=\frac{A(z)}{B(z)}+\frac{C(z)}{D(z)}f(z^d).$$
Let $b\geq 2$ be an integer such that $B(\frac{1}{b^{d^m}})C(\frac{1}{b^{d^m}})
D(\frac{1}{b^{d^m}})\neq 0$ for all integers $m>0$. If there exists an
increasing sequence of positive integers $(a_i)_{i\geq0}$ such that $H_{a_i}(f)
\neq 0$ for all integers $i\geq 0$ and $\limsup\limits_{i\rightarrow \infty}
\frac{a_{i+1}}{a_i}=\rho$, then $f(1/b)$ is transcendental, and we have
$$\mu(f(\frac{1}{b}))\leq (1+\rho)\min\{\rho^2,d\}.$$
In particular, the irrationality exponent of $f(1/b)$ is equal to $2$ if $\rho=
1$.
\end{thm}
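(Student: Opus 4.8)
The plan is to manufacture, out of the two hypotheses, a sufficiently rich supply of rational approximations to $\xi:=f(1/b)$ and then feed them into a quantitative Diophantine lemma. First I would extract Pad\'e approximants from the nonvanishing Hankel determinants. For each $i$, since $H_{a_i}(f)\neq 0$ the linear system defining the $[a_i-1/a_i]$ Pad\'e approximant is nonsingular (its coefficient matrix is a Hankel matrix built from the $c_j$ with determinant $\pm H_{a_i}(f)$), so there are polynomials $P_i,Q_i\in\mathbb{Z}[z]$ with $\deg P_i\le a_i-1$, $\deg Q_i=a_i$, and remainder $R_i:=Q_if-P_i=O(z^{2a_i})$; Cramer's rule together with Hadamard's inequality bounds the coefficients of $P_i,Q_i$ in terms of the $c_j$. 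Because the $c_j$ are integers and $f$ converges in the unit disk (so $\limsup|c_j|^{1/j}\le 1$), clearing the denominator $b^{a_i}$ produces integers $p_i=b^{a_i}P_i(1/b)$, $q_i=b^{a_i}Q_i(1/b)$ with $\log|q_i|\sim a_i\log b$ and $|q_i\xi-p_i|=b^{a_i}|R_i(1/b)|\ll b^{-a_i}$, i.e.\ approximations of quality essentially $2$ whose heights advance by the multiplicative gap $\rho$.

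Next I would iterate the functional equation. Writing it after $m$ steps as $f(z)=R_m(z)+E_m(z)f(z^{d^m})$ with $R_m=U_m/V_m$ and $E_m=G_m/V_m$ rational of degree $O(d^m)$, the hypothesis $B(b^{-d^m})C(b^{-d^m})D(b^{-d^m})\neq 0$ guarantees these are well defined and nonvanishing at $z=1/b$. Substituting the Pad\'e approximant of $f$ at level $a_i$ into the variable $z^{d^m}$ and inserting it into the iterated equation yields a two-parameter family $p_{i,m}/q_{i,m}$ of rational approximations to $\xi$ with
\[
\log q_{i,m}\sim (a_i+\kappa)\,d^m\log b,\qquad |\xi-p_{i,m}/q_{i,m}|\ll b^{-2a_id^m},
\]
where $\kappa$ is the degree-growth rate of the iteration; here the heights advance by the gap $d$. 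Transcendence of $\xi$ I would then obtain by Mahler's method: were $\xi$ algebraic, the functional equation would propagate algebraicity across the scales $z^{d^m}$ and force $f$ to be a rational function, whose Hankel determinants are ultimately zero by the Kronecker lemma recalled in the introduction, contradicting $H_{a_i}(f)\neq 0$ for infinitely many $i$.

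The core is a Diophantine lemma converting these families into an upper bound for $\mu(\xi)$. Given a reduced fraction $p/q$ with $\log q=Q\log b$, I would choose indices $(i,m)$ so that the height exponent $(a_i+\kappa)d^m$ lies just above $Q$ while the contact exponent $2a_id^m$ comfortably exceeds $Q+(a_i+\kappa)d^m$; then $1/(q\,q_{i,m})\le|\xi-p/q|+|\xi-p_{i,m}/q_{i,m}|$ forces $|\xi-p/q|$ not to be too small, bounding $\mu(\xi)$. The two terms of $\min\{\rho^2,d\}$ reflect the two grids available for tuning the height to the target scale $Q$: one may advance through the plain Pad\'e indices (step ratio $\rho$), which is optimal when $\rho^2\le d$ and contributes the factor $\rho^2$, or through the functional-equation iteration (step ratio $d$), optimal when $d<\rho^2$ and contributing $d$, while the residual factor $1+\rho$ is the standard loss from passing between consecutive approximations separated by ratio~$\rho$. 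Carrying out this optimisation is meant to deliver $\mu(\xi)\le(1+\rho)\min\{\rho^2,d\}$; specialising to $\rho=1$ gives the upper bound $2$, which combined with the universal lower bound $\mu(\xi)\ge 2$ for irrational $\xi$ yields $\mu(\xi)=2$.

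The main obstacle is the Diophantine lemma itself. I must ensure the selected approximations are genuinely distinct from $p/q$ and have nonzero height and nonzero error (drawing on $H_{a_i}(f)\neq 0$ and on the nonvanishing of $B,C,D$ at the points $b^{-d^m}$), and I must control the polynomial coefficient sizes carefully enough that the height estimate $\log q_{i,m}\sim(a_i+\kappa)d^m\log b$ holds with negligible correction. Most delicately, the optimisation over $(i,m)$ must correctly balance approximation quality, which favours large $a_i$, against the fineness of the height grid, which is governed by $\rho$ and $d$, so as to land precisely on the exponent $(1+\rho)\min\{\rho^2,d\}$; proving that this choice is sharp, and that no competing rational approximation to $\xi$ escapes the net $\{p_{i,m}/q_{i,m}\}$, is where the genuine difficulty resides.
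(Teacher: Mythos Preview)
The paper does not contain a proof of this theorem: it is quoted verbatim from Bugeaud, Han, Wen and Yao \cite{BHWY2015} and used as a black box in Section~\ref{section:IE}. There is therefore no ``paper's own proof'' to compare your proposal against.

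That said, your sketch is a faithful outline of the strategy actually used in \cite{BHWY2015}: nonvanishing Hankel determinants $H_{a_i}(f)$ yield genuine Pad\'e approximants $P_i/Q_i$ with contact order $2a_i$; iterating the Mahler-type functional equation $m$ times and substituting the Pad\'e approximant at $z^{d^m}$ produces a two-parameter family of rational approximations to $\xi=f(1/b)$ whose heights and errors scale as you describe; and a standard triangle-inequality argument then converts this family into an upper bound for $\mu(\xi)$. Your interpretation of the two factors in $(1+\rho)\min\{\rho^2,d\}$ --- one coming from gaps in the Pad\'e grid, the other from the functional-equation grid --- is essentially the heuristic behind the optimisation in the cited paper. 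The points you flag as delicate (nondegeneracy of the approximants, control of coefficient growth, and the precise index selection) are exactly the technical content of \cite{BHWY2015}, so your proposal is an accurate high-level summary rather than a new argument. For the purposes of the present paper, however, no proof is required: simply invoke \cite{BHWY2015}.
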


A real number $\xi$ is said to be {\it automatic} if there exist two integers $d,b
\geq 2$ such that the $b$-ary expansion of $\xi$ is $d$-automatic.
We consider sequences of the form $\tilde{\bf{f}}=
\Phi(\tilde{\bf{v}})$ defined in \eqref{def:Phi}. This series
satisfy the functional equation
$$\tilde{\bf{f}}(x)=\tilde{\bf{v}}(x)\tilde{\bf{f}}(x^d).$$
As $\tilde{\bf{f}}$ takes only $\pm 1$ as coefficients, it
converges inside the unit disk. Moreover, the polynomial $\tilde{\bf{v}}$ does not have 
any root of the form $1/b^{d^m}$ for $b\geq 2$ and $m\geq 1$. This is because if we write
$\tilde{\bf{v}}(x)=\sum_{i=0} ^{d-1} v_i x^i$, then for any prime
factor $p$ of $b$, the $p$-adic valuation of $\tilde{\bf{v}}(1/b^{d^m})$ is
equal to the $p$-adic valuation of $(1/b^{d^m})^{d-1}$.
So we may 
apply Theorem \ref{th:BHWY} to automatic real numbers $f(\frac{1}{b}):=
\sum\limits_{j=0}^\infty \frac{f_j}{b^j}$ for $b\geq 2$.

\subsection{Two Examples}\label{ss:rho_examples}
For the series $\tilde{\bf{f}}=\Phi(1,1,-1,-1,1)$ considered in Section \ref{example1}, 
from relations between elements in the kernel of the sequence $\bf Z$, 
we deduce that the sequence $(Z_n)=(H_n/2^{n-1} \pmod 2)$ is
equal to $\tau(s^\infty(A))$ where $s$ is the $5$-substitution 
\begin{align*}
								&A\mapsto ABCDA\\
			      		 &B\mapsto BCDAB\\
			       		&C\mapsto CDDDD\\
          			&D\mapsto DDDDD,
\end{align*}
and $\tau$ is the coding $A,D\mapsto 0$, $B,C\mapsto 1$. 
Therefore,
$$\mathbf{Z}=\tau(s^n(A)s^n(B)s^n(C)s^n(D)s^n(A)s^{n+1}(B)\dots)$$
for $n\geq 1$. 
We have $s^n(C)=CD^{5^n-1}$, $s^n(D)=D^{5^n}$, and $s^n(A)$ begins by $AB$.
If we let $a_n$ denote the position of the $n$-th $1$ in the sequence $\bf Z$,
then is it easy to observe that $\limsup a_{i+1}/a_i $ is realized by the gap
between the first letter in $s^n(C)$ and the second letter in the second 
$s^n(A)$. Therefore
$$\rho=\limsup\limits_{i\rightarrow\infty} \frac{a_{i+1}}{a_i}=\lim\limits_{n 
\rightarrow \infty} 
\frac{5^n\times 4+1}{5^n\times 2}=2.$$
Theorem \ref{th:BHWY} then gives $$\mu(f(1/b))\leq 12,$$
for all integers $b\geq 2$.

\medskip

For the series $\tilde{\bf{f}}=\Phi(1,1,-1,-1)$ considered in Section
\ref{example2}, 
from relations between elements in the kernel of the sequence $\bf Z$, 
we deduce that the sequence $(Z_n=(H_n/2^{n-1} \pmod 2)$
is equal to $\tau(s^\infty(A))$ where $s$ is the $4$-substitution
\begin{align*}
	A&\mapsto ABCD\\
  B&\mapsto DDAB\\
	C&\mapsto CDDD\\
	D&\mapsto DDDD,
\end{align*}
and $\tau$ is the coding $A,D\mapsto 0$, $B,C\mapsto 1$. Therefore,
$$\mathbf{Z}=\tau(s^n(A)s^n(B)s^n(C)s^n(D)s^{n+1}(B)\dots)$$
for $n\geq 1$. We have $s^n(C)=CD^{4^n-1}$, $s^n(D)=D^{4^n}$, and 
$s^{n+1}(B)$ begins by $ D^{4^n\times 2}AB$. We let $a_n$ denote the position
of the $n$-th $1$ in the sequence $\bf Z$. We observe that $\limsup 
a_{i+1}/a_i$ is realized by the gap between the first letter in $s^n(C)$ 
and the first $B$ in $s^{n+1}(B)$. Therefore
$$\rho=\limsup\limits_{i\rightarrow \infty} \frac{a_{i+1}}{a_i}=
\lim\limits_{n\rightarrow \infty} \frac{4^{n+1}+4^n\times 2 +1}{4^n\times 2}
=3.$$
Theorem \ref{th:BHWY} then gives
$$\mu(f(1/b))\leq 16,$$
for all integers $b\geq 2$.

\subsection{The General Case and Comparison with Previous Upper Bounds}
To find an upper bound of the irrational exponent of an automatic number of 
the form $f(1/b)$ where $\tilde{\bf{f}}=\Phi(\tilde{\bf{v}})$. 
We proceed as follows. First, find a substitution $s$ and a coding $\tau$
such that $\bf Z$ is the image by $\tau$ of a fixed point of $s$;
then, calculate $\rho$ directly as in the examples;
finally, derive the irrationality exponent using Theorem \ref{th:BHWY}.
The following theorem \cite{Cobham1972} garantees that it is always possible to 
find such $s$ and $\tau$.
%(in fact this can be done automatically by the Python Program {\tt kernel2sub.py}).

\begin{thm}[Cobham] %Cobham, Uniform tag sequences, 1972
	A sequence $\bf u$ is $d$-automatic if and only if it is the image of a fixed
	point of a $d$-substitution.
\end{thm}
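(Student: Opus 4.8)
\emph{Proof proposal.} The plan is to prove the two implications separately, using the characterization (recalled above) that a sequence is $d$-automatic if and only if its $d$-kernel is finite. Throughout, ``image of a fixed point'' is understood to mean the image under a coding, i.e.\ a letter-to-letter morphism $\tau$.

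For the ``if'' direction, suppose $\mathbf{u}=\tau(\mathbf{w})$ where $\mathbf{w}$ is a fixed point of a $d$-uniform morphism (i.e.\ $d$-substitution) $\psi$ on an alphabet $B$, prolongable at a letter $b$, so that $\psi(b)$ begins with $b$ and $\mathbf{w}=\lim_k\psi^k(b)$. The key identity is that, since $\mathbf{w}=\psi(\mathbf{w})$, one has $w(dn+j)=\psi(w(n))_j$ for all $n\in\mathbb{N}$ and $0\le j\le d-1$, where $\psi(c)_j$ is the $(j{+}1)$-st letter of $\psi(c)$. Writing $\pi_j\colon B\to B$ for the map $c\mapsto\psi(c)_j$, this says $\Lambda_j\mathbf{w}=\pi_j\circ\mathbf{w}$, and by an immediate induction every element of the $d$-kernel of $\mathbf{w}$ has the form $g\circ\mathbf{w}$ with $g$ a composition of the finitely many maps $\pi_0,\dots,\pi_{d-1}$; hence the $d$-kernel of $\mathbf{w}$ is finite. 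The same then holds for $\mathbf{u}=\tau\circ\mathbf{w}$, since $\Lambda_j(\tau\circ\mathbf{w})=\tau\circ\pi_j\circ\mathbf{w}$. By the kernel characterization, $\mathbf{u}$ is $d$-automatic.

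For the ``only if'' direction, suppose $\mathbf{u}$ is $d$-automatic. Since feeding the base-$d$ digits of $n$ to an automaton from the most or from the least significant end yields the same class of sequences (a standard fact; both coincide with the sequences of finite $d$-kernel), we may fix a $d$-DFAO $M=(Q,\Sigma,\delta,q_0,\Delta,\tau)$ that reads its input most significant digit first, and, after possibly adjoining one state, we may assume $\delta(q_0,0)=q_0$, so that leading zeros do not affect the computation. Let $\mathbf{q}$ be the state-valued sequence with $q(n)$ equal to the state reached after reading the base-$d$ representation of $n$; then $\mathbf{u}=\tau\circ\mathbf{q}$. Because appending a digit $j$ at the end of $\mathrm{rep}_d(n)$ produces $\mathrm{rep}_d(dn+j)$ and is processed last by an MSD automaton, one obtains $q(dn+j)=\delta(q(n),j)$ for all $n,j$. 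Now define the $d$-uniform morphism $\psi$ on the alphabet $Q$ by $\psi(q)=\delta(q,0)\,\delta(q,1)\cdots\delta(q,d-1)$. The identity $q(dn+j)=\psi(q(n))_j$ together with $q(0)=q_0$ shows that $\mathbf{q}$ is precisely the fixed point of $\psi$ starting at $q_0$ (which is prolongable at $q_0$ since $\psi(q_0)$ begins with $\delta(q_0,0)=q_0$). Therefore $\mathbf{u}=\tau(\psi^{\infty}(q_0))$ is the image under the coding $\tau$ of a fixed point of the $d$-substitution $\psi$.

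I expect the main obstacle to lie in the ``only if'' direction: the recurrence $q(dn+j)=\delta(q(n),j)$ that lets one recognize $\mathbf{q}$ as a genuine fixed point holds cleanly only for an automaton reading input most-significant-digit-first, whereas the natural $d$-kernel operators $\Lambda_j$ correspond to reading the \emph{least} significant digit; the passage to an MSD automaton, and the normalization $\delta(q_0,0)=q_0$ needed for prolongability, are where care is required. The ``if'' direction, by contrast, is a short computation once the identity $\Lambda_j\mathbf{w}=\pi_j\circ\mathbf{w}$ is in hand.
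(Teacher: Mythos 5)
The paper does not prove this statement at all: it is quoted as a known result with a citation to Cobham's 1972 paper, so there is no in-paper argument to compare against. Your proof is correct and is essentially the classical one (it is the proof given in Allouche--Shallit, Theorem 6.3.2). The ``if'' direction is complete and self-contained: the identity $\Lambda_j\mathbf{w}=\pi_j\circ\mathbf{w}$ and the closure of the maps $\pi_0,\dots,\pi_{d-1}$ under composition inside the finite monoid of self-maps of $B$ do give a finite $d$-kernel, and post-composing with $\tau$ preserves this. The ``only if'' direction correctly isolates the two standard lemmas it needs --- that reading most-significant-digit-first gives the same class of sequences as the kernel/LSD characterization, and that one may normalize to $\delta(q_0,0)=q_0$ at the cost of one extra state --- and your one-extra-state construction does work (give the new initial state a self-loop on $0$, copy the transitions of $q_0$ on nonzero digits, and assign it the output $\tau(q_0)$). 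The induction identifying the state sequence $\mathbf{q}$ with $\psi^\infty(q_0)$ via $q(dn+j)=\delta(q(n),j)$ and $q(0)=q_0$ is sound. If you wanted the write-up to be fully self-contained you would still have to prove the MSD/LSD equivalence (via closure of regular languages under reversal), but flagging it as a standard fact is reasonable given that the theorem itself is standard.
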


The alphabet of the substitution $s$ is called the {\it internal alphabet} of 
$\bf u$.
The irrationality exponent of automatic sequences has been studied in 
\cite{AC2006}, where Adamczewski et al. established the following theorem:

\begin{thm}\label{th:AC06}
Let $d$ and $b$ be two integers at least equal to two and let $\mathbf{a}=(a_n)
_{n\geq 0}$ be an infinite sequence generated by a $d$-automaton and with values
in $\{0,1,...,b-1\}$. Let $m$ be the cardinality of the $d$-kernel of the 
sequence $\bf a$ and let $c$ be the cardinality of the internal alphabet 
associated to $\bf a$. Then, the irrationality exponent $\mu(\xi)$ of the real
number
$$\xi := \sum\limits_{n=0}^{\infty} \frac{a_n}{b_n}$$
satisfies
$$\mu(\xi)\leq cd(d^m+1).$$
\end{thm}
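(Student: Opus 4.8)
The plan is to convert the hypotheses into a finite linear system of Mahler-type functional equations and to read off an effective irrationality measure from its iterates. First I would enumerate the $d$-kernel of $\mathbf{a}$ as $\{\mathbf{a}^{(1)}=\mathbf{a},\dots,\mathbf{a}^{(m)}\}$ and set $f_i(z)=\sum_{n\ge 0}a^{(i)}_n z^n$. Splitting each $f_i$ along residue classes of the exponent modulo $d$ and using that every $\Lambda_j\mathbf{a}^{(i)}$ again belongs to the kernel gives a system
$$\mathbf{F}(z)=M(z)\,\mathbf{F}(z^{d}),\qquad \mathbf{F}=(f_1,\dots,f_m)^{T},$$
where $M(z)$ is an $m\times m$ matrix whose entries are $\{0,1\}$-polynomials of degree at most $d-1$; passing through the Cobham substitution on the internal alphabet controls the shape of this system, and is where the factor $c$ will eventually enter. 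Iterating $k$ times at $z=1/b$ yields
$$\mathbf{F}(1/b)=M(1/b)M(1/b^{d})\cdots M(1/b^{d^{k-1}})\,\mathbf{F}(1/b^{d^{k}}),$$
and since each $f_i$ has coefficients in $\{0,\dots,b-1\}$, replacing $\mathbf{F}(1/b^{d^{k}})$ by the vector of its zeroth coefficients makes an error that is $O(b^{-d^{k}})$ and produces a rational vector $\mathbf{p}_k/q_k$ with $\log q_k\asymp d^{k}$. Its first coordinate gives rational approximations $p_k/q_k$ to $\xi$ with $|\xi-p_k/q_k|\asymp b^{-d^{k}}$.

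Granting that these approximations are non-degenerate, the conclusion follows from a routine computation: along any subsequence of distinct approximations, setting $\rho:=\limsup_i \log q_{k_{i+1}}/\log q_{k_i}$, the triangle inequality applied to $|p_{k_i}/q_{k_i}-p_{k_{i+1}}/q_{k_{i+1}}|\ge 1/(q_{k_i}q_{k_{i+1}})$ together with the quality estimate above gives $\mu(\xi)\le 1+\rho$. Since $\log q_k\asymp d^{k}$, a subsequence in which consecutive usable indices differ by at most $m$ has $\rho\le d^{m}$, and reinstating the $O(c)$ and $O(d)$ constants coming from the degrees and coefficient sizes of the matrices $M(1/b^{d^{j}})$ upgrades $1+\rho$ to $cd(d^{m}+1)$.

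The real content — and the step I expect to be the main obstacle — is justifying the non-degeneracy: one must show that the rational vectors produced above do not all agree from some point on, and more quantitatively that within every window of $m$ consecutive values of $k$ there is an index giving a genuinely new approximation of the stated quality. The idea is to exploit the kernel structure: if, over a block of $m$ successive levels, all the associated rational vectors were scalar multiples of a single fixed vector, then iterating the functional system would force $\mathbf{F}$, and hence $\xi=f_1(1/b)$, to be rational, contrary to hypothesis. Turning this rank argument into the precise statement that the usable indices have bounded gaps, and tracking the constants carefully enough to land on the exact bound $cd(d^{m}+1)$ rather than merely something of the same order of magnitude, is the crux; the remaining estimates on geometric tails and on heights of matrix products are routine.
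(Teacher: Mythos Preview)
The paper does not prove this theorem; it is quoted from Adamczewski--Cassaigne \cite{AC2006} and used only as a benchmark against which the Hankel-determinant bounds of Section~\ref{section:IE} are compared. So there is no proof in the paper to compare your attempt against.

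On the merits of your sketch itself: the broad strategy --- build a finite system from the $d$-kernel, iterate to produce rational approximations with $\log q_k\asymp d^k$, then bound $\mu(\xi)$ by controlling gaps between usable indices --- is the right genre and is close in spirit to what Adamczewski and Cassaigne actually do. Their argument, however, is more combinatorial than your matrix-product formulation: they work directly with the Cobham substitution on the internal alphabet of size $c$, extract explicit repetitions (prefixes that are near-periodic with controlled period and exponent), and feed those into a Liouville-type inequality. That is precisely where both $c$ and $d$ enter the final constant. In your set-up the parameter $c$ has no natural entry point --- the system is already $m$-dimensional via the kernel, and your line ``reinstating the $O(c)$ and $O(d)$ constants'' is where the argument becomes hand-waving rather than a proof. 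The non-degeneracy issue you single out is real, but in the original it is handled by pigeonhole on the finitely many kernel sequences rather than by a rank argument on rational vectors. So your outline plausibly leads to \emph{some} bound of the shape $C(d,m)$, but as written it does not give a clear path to the specific constant $cd(d^m+1)$.
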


For certain sequences of the form $\Phi( \tilde{\bf{v}})$,
the Hankel determinant method yields a better upper bound than this 
general estimate. For example, for the sequence $\Phi(1,1,-1,-1)$ considered in 
Section \ref{ss:rho_examples}, our computation gives $\mu(f(1/b)\leq 16$ while
Theorem \ref{th:AC06} gives $\mu(f(1/b))\leq 136$; for the sequence $\Phi(1,1,-1,
-1,1)$, our computation gives $\mu(f(1/b))\leq 12$ while
Theorem \ref{th:AC06} gives $\mu(f(1/b))\leq 260$.
For the Thue-Morse sequence $\bf t$, Theorem \ref{th:AC06} gives 
$\mu(t(1/b))\leq 20$ and an argument more adapted to this specific sequence
in the same article gives $\mu(t(1/b))\leq 5$, while Hankel determinant method yields
$\mu(t(1/b))=2$,
as shown by Bugeaud in \cite{Bu2011}.
\medskip

\section{Implementation and outputs}%<<<
Our main result Theorem \ref{thm:main} says that
for each $\pm 1$-vector $\bf v$ of length
$d$, the sequence of the reduced Hankel determinants modulo 2 of the sequence
$\Phi(\tilde{\bf{v}}(x))$ is $d$-automatic.
Since the proof of the main theorem is constructive, 
we implement the method to find the minimal $d$-DFAO of the sequence 
$(H^n/2^{n-1} \pmod 2)$.
This is done by the Python programs {\tt Apwen2.py} and {\tt ASHankel.py}.
\footnote{The programs {\tt Apwen2.py} and {\tt ASHankel.py} and examples are available at \\
\texttt{irma.math.unistra.fr/\~{}guoniu/papers/w03ashankel/}}
We illustrate below the usage of these programs and their output 
for the two examples studied in Section~\ref{sec:examples} and
Section~\ref{ss:rho_examples}.

\medskip

\hrule\smallskip
\noindent
{\bf Example 1. Run the following commands} 
\smallskip
\hrule\smallskip
{
\begin{verbatim}
	> printf 'opt="StdEx"\nfrom ASHankel import *\n' > example1.py
	> python Apwen2.py 1 '1,1,-1,-1,1' >> example1.py
	> python example1.py
\end{verbatim}
}
\medskip

\goodbreak
\hrule\smallskip
\noindent
{\bf Output 1} 
\smallskip
\hrule\smallskip
\nobreak
{
\begin{verbatim}
	v= [1, 1, -1, -1, 1]
	d= 5
	Automaton:
	transition function=
	 [[0,1,2,3,4], [1,2,3,4,1], [2,3,3,4,2], [3,3,3,3,3], [4,2,3,3,4]]
	output function= [0, 1, 1, 0, 0] 

	Substitution:
	morphism= [[0,1,2,3,0], [1,2,3,0,1], [2,3,3,3,3], [3,3,3,3,3]]
	coding= [0, 1, 1, 0]
\end{verbatim}
}

\goodbreak
\medskip

\hrule\smallskip
\noindent
{\bf Example 2. Run the following commands} 
\smallskip
\hrule\smallskip
{
\begin{verbatim}
> printf 'opt="StdEx"\nfrom ASHankel import *\n' > example2.py
> python Apwen2.py 1 '1,1,-1,-1' >> example2.py
> python Apwen2.py -1 '1,1,-1,-1' >> example2.py
> python example2.py
\end{verbatim}
}
\medskip

\goodbreak
\hrule\smallskip
\noindent
{\bf Output 2} 
\smallskip
\hrule\smallskip
\nobreak
{
\begin{verbatim}
v= [1, 1, -1, -1]
d= 4
Automaton:
transition function= 
[[1,2,2,3], [1,4,2,4], [2,4,3,4], [4,2,4,3], [4,4,4,4]]
output function= [0, 0, 1, 0, 0]

Substitution:
morphism= [[0,1,2,3], [3,3,0,1], [2,3,3,3], [3,3,3,3]]
coding= [0, 1, 1, 0]
\end{verbatim}
}

\hrule\smallskip

\medskip

{\bf Acknowledgments}. The authors would like to thank 
Zhi-Ying Wen and Hao Wu
who invited them to Tsinghua University where the paper was finalized.
The authors also thank
Jean-Paul Allouche, Yann Bugeaud, 
Jacques Peyri\`ere,  Zhi-Xiong Wen, and Wen Wu for valuable discussions.
% >>>

%\break

\bibliographystyle{plain}

%\bibliography{ashankel.bib}

%%\begin{thebibliography}{99}
%%\end{thebibliography}

\end{document}